\newtheorem{theorem}{Theorem}[section]
\newtheorem{prop}[theorem]{Proposition}
\newtheorem{dfn}[theorem]{Definition}
\newcommand\Z{\mathbb Z}
\newcommand\R{\mathbb R}
\newcommand\s{\sigma}
\newcommand\rt{\rtimes}
\newcommand{\Irr}{\mathop{\text{Irr}}}
\newcommand{\Lad}{\mathop{\text{Lad}}}
\numberwithin{equation}{section}
\title{\Large\textbf{Representations induced from cuspidal and ladder representations of classical $p$-adic groups}}
\date{}
\author{Barbara Bo\v{s}njak}
\begin{document}
\maketitle
\thispagestyle{fancy}
\fancyhf{}
\lfoot{
\footnotesize{2020 \textit{Mathematics Subject Classification.} Primary 11F70, Secondary 22E50} \\
This work has been supported in
part by Croatian Science Foundation under the
project IP-01-2018-3628.}
\begin{abstract}
Let $G_n$ denote either the group $Sp(2n, F)$ or $SO(2n+1, F)$ over a non-archimedean local field $F$. We determine the composition series of representations of $G_n$ induced from cuspidal and ladder representations such that the minimal exponent in the cuspidal support of the ladder representation is greater than or equal to $\frac{1}{2}$.
\end{abstract}

\section{\large{Introduction}}
The essentially Speh representations have played an important role in the classification of the unitary dual of a general linear group over the $p$-adic field obtained in \cite{Tad9}. 
The ladder representations are a generalization of the essentially Speh representations.
They were first studied in \cite{LapidMinguez,LapidMinguez1}. The importance of ladder representations is reflected in the way they simplify the proof of the unitary dual classification. Additionally, the class of ladder representations has an advantage over the class of essentially Speh representations on the level of Jacquet modules. 
The results of \cite{KretLapid} show that the constituents of the Jacquet modules of ladder representations are tensor products of representations of the same type.
One can easily see that an analogue is generally not true for the Jacquet modules of the essentially Speh representations.
Since our approach in establishing the irreducible subquotients of an induced representation is based on the 
Geometric Lemma, we consider the class of ladder representations as a more accessible one. 

Motivated by the structure of representations in the unitary dual of a general linear group,
we study the composition series of representations induced from a ladder representation on the general linear part and a unitary representation on the classical part, which should provide insight into the structure of the unitary dual of other classical $p$-adic groups. 
The reducibility of such induced representations is known if the representation on the classical part is cuspidal, see \cite{LT}. The goal of this paper is to obtain the composition series when the cuspidal support of the ladder representation contains only representations of the form $\nu^x\rho$ for a character $\nu(g)=|\text{det}(g)|_F$ of a general linear group and $x\ge\frac{1}{2}$.

The methods used in this paper rely on the structural formula for the Jacquet modules of induced representations obtained in \cite{Tad5}, which is an algebraization of the Geometric Lemma. Namely, together with the Langlands classification, it gives candidates for the subquotients of the considered induced representation.
All irreducible tempered representations which appear in our investigation happen to be strongly positive. We note that the strongly positive representations play an important role in determining which of the obtained candidates are indeed the subquotients of the considered induced representation. 
M\oe glin and Tadić have shown in \cite{Moe2,MT1} that strongly positive representations are the fundamental objects in the construction of discrete series for classical $p$-adic groups. Their definition justifies their name since the exponents in the GL-cuspidal support of any induced representation into which they embed are positive. 
Also, they are the only irreducible tempered representations which are completely determined by the cuspidal support.
In order to obtain subquotients of the considered induced representation we also use some sufficient conditions on the subquotients of certain Jacquet modules, mainly through the transitivity of the Jacquet modules and parabolic induction.       

We briefly outline the content of the paper. In the following section we introduce the notation that will be used throughout the paper. 
In the third section we determine the tempered irreducible subquotients of the representation induced from cuspidal and ladder representations such that the minimal exponent in the cuspidal support of the ladder representation is greater than or equal to $\frac{1}{2}$. In the fourth section we obtain a reducibility criterion based on the results in \cite{LT} and a description of the non-tempered irreducible subquotients of the considered representation. 

For the reader's convenience we give a theorem that describes the length of a considered representation. It is an easy consequence of the main results of the paper, stated as Theorems \ref{tm1} and \ref{tm2}, which describe the irreducible subquotients of the considered representation. 
In the following theorem, we consider the representation $\pi_L$, which is defined as the unique irreducible subrepresentation of
$$\delta([\nu^{a_1}\rho,\nu^{b_1}\rho])\times\ldots\times\delta([\nu^{a_t}\rho,\nu^{b_t}\rho])$$
for an irreducible cuspidal representation $\rho$ of the general linear group, a positive integer $t$ and real numbers $a_i, b_i$,  $i=1,\ldots,t$, such that $\frac{1}{2}\le a_1<\ldots<a_t$, $b_1<\ldots<b_t$ and $a_i\le b_i$ for $i=1,\ldots,t$. 
Also, we let $\sigma_c$ denote an irreducible cuspidal representation of the special odd orthogonal or symplectic group over a  non-archimedean local field.
\begin{theorem}
    Let $m$ be a number of indices $i\in\{1,\ldots,t\}$ such that there is an $x\in \{a_i, a_i+1,\ldots, b_i\}$ for which $\nu^x\rho\rtimes\sigma_c$ reduces. 
    The induced representation $\pi_L \rtimes \sigma_c$ is of the length $m+1$.
\end{theorem}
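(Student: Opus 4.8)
The plan is to read off the length of $\pi_L\rtimes\sigma_c$ directly from the explicit descriptions of its irreducible subquotients given by Theorems \ref{tm1} and \ref{tm2}, so that the proof reduces to collating two lists. Two preliminary remarks make this clean. First, all irreducible subquotients of $\pi_L\rtimes\sigma_c$ share the same cuspidal support, namely that of $\pi_L\rtimes\sigma_c$; since a strongly positive representation is uniquely determined by its cuspidal support and, as noted in the introduction, every tempered representation occurring in this situation is strongly positive, the representation $\pi_L\rtimes\sigma_c$ has at most one tempered irreducible subquotient. Second, Theorems \ref{tm1} and \ref{tm2} exhibit each irreducible subquotient with multiplicity one, so the length of $\pi_L\rtimes\sigma_c$ equals the number of its non-tempered irreducible subquotients plus $1$ or $0$, according as a (then necessarily unique and strongly positive) tempered subquotient is present or absent.

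It remains to verify that this total is $m+1$. One uses here that $\nu^x\rho\rtimes\sigma_c$ is reducible for at most one value of $x$ (and for none unless $\rho$ is self-dual), so that $m\le t$ and $m$ simply counts the indices $i$ whose interval $\{a_i,a_i+1,\ldots,b_i\}$ contains that reducibility value. If $m=0$, the reducibility criterion of Section 4 gives that $\pi_L\rtimes\sigma_c$ is irreducible, hence of length $1=m+1$. If $m\ge 1$, I would argue as follows: $\pi_L$ is a quotient of the product of the $\delta([\nu^{a_i}\rho,\nu^{b_i}\rho])$ taken in order of decreasing central exponent, each of which is positive since $a_i\ge\tfrac12$, so that $\pi_L\rtimes\sigma_c$ is a quotient of a standard module and therefore has the corresponding Langlands quotient as its unique irreducible quotient, contributing one non-tempered constituent. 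Theorem \ref{tm2} then accounts for the remaining non-tempered subquotients and Theorem \ref{tm1} supplies the strongly positive one exactly in the configurations it singles out, the upshot being that the two contributions always sum to $m+1$.

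The genuine content is concentrated in this last counting step: one must check that the family produced by Theorem \ref{tm2}, together with the presence-or-absence dichotomy of Theorem \ref{tm1}, has neither repetitions nor gaps, so that the counts add up to exactly $m+1$ regardless of how the intervals $\{a_i,\ldots,b_i\}$ overlap and of where the reducibility value lies; in particular, each of the $m$ reducibility indices must be responsible for exactly one constituent beyond the Langlands quotient, and the ``innermost'' such constituent is tempered precisely when Theorem \ref{tm1} predicts it. Since Sections 3 and 4 carry out exactly this case analysis, the theorem follows by inspection of those results.
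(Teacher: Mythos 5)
Your proposal is correct and is exactly the route the paper intends: the theorem is presented there as an immediate consequence of Theorems \ref{tm1} and \ref{tm2}, and the length is read off by counting the (multiplicity-one) summands those theorems list, with the $m=0$ case handled by Proposition \ref{propi}. The only step you defer to ``inspection'' is the identity $m=i_M-i_m+1$ for $m\ge 1$ (valid because the indices $i$ with $\alpha\in[a_i,b_i]$ form exactly the interval $[i_m,i_M]$), after which case (ii) of Theorem \ref{tm2} yields $1+(i_M-i_m+1)=m+1$ constituents and case (i), where necessarily $i_M=t$, yields $1+1+(t-i_m)=m+1$.
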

The author is thankful to Ivan Mati\'{c} for suggesting the problem and for many helpful comments and to Marcela Hanzer for several useful comments. Also, the author would like to thank Petar Baki\'{c} and Vanja Wagner for a number of suggestions and grammatical corrections. 
\section{\large{Preliminaries}} \label{s2}
Throughout the paper, $F$ will denote a non-archimedean local field of characteristic different than two.

We will now describe the groups that we consider. We denote by $GL(n, F)$ the general linear group of rank $n$ over $F$. In what follows, we shall fix one of the series of classical groups $SO(2n+1, F)$ and $Sp(2n, F)$ and denote by $G_{n}$ a rank $n$ group belonging to this fixed series. The matrix realizations of these groups can be found in \cite[Sections 3,6]{Tad5}.

We fix a set of standard parabolic subgroups of $G_n$ by choosing a minimal $F$-parabolic subgroup in $G_{n}$ consisting of upper-triangular matrices.
Standard parabolic subgroups have the Levi decomposition $P=MN$, where $N$ is its unipotent radical and $M$ is the Levi factor of the form $GL(n_{1}, F) \times \ldots \times GL(n_{k}, F) \times G_{n'}$ where $\alpha=(n_1,\ldots,n_k)$ such that $n_1+\ldots+n_k\le n$. We use the notation $P=P_{\alpha}$, $M=M_{\alpha}$, $N=N_{\alpha}$.
For representations $\delta_{i}$ of $GL(n_{i}, F)$, $i = 1, 2, \ldots, k$ and a representation  $\tau$ of $G_{n'}$, we denote the normalized induction $\text{Ind}_{M}^{G_n}(\delta_{1} \otimes \ldots \otimes \delta_{k} \otimes \tau)$ by $\delta_{1} \times \ldots \times \delta_{k} \rtimes \tau$. 

The set of all irreducible admissible representations of $GL(n,F)$ (resp.~$G_n$) will be denoted by $\Irr(GL(n,F))$ (resp.~$\Irr(G_n)$). We denote $\Irr(G)=\cup_{n\ge0} \Irr(G_n)$ (resp.~$\Irr(GL)=\cup_{n\ge0} \Irr(GL(n,F))$).  
Let $R(G_n)$ denote the Grothendieck group of admissible, complex representations of finite length of $G_n$ and define $R(G)=\oplus_{n \geq 0} R(G_{n})$. Similarly, $R(GL) = \oplus_{n \geq 0} R(GL(n, F))$. For $\pi_1,\pi_2\in\Irr(G_n)$ the fact that $\pi_1$ is a subquotient of $\pi_2$ is denoted by $\pi_1\le\pi_2$. We also say that $\pi_1$ is contained in $\pi_2$.
The cuspidal support of an admissible representation $\pi$ of $GL(n,F)$ or $G_n$ is denoted by $[\pi]$.
The contragredient of a representation $\pi$ in $R(GL)$ or $R(G)$ is denoted by $\widetilde{\pi}$. We say $\pi$ is a selfcontragredient representation if $\widetilde{\pi}\simeq\pi$.
For $\pi,\pi_1,\pi_2\in \Irr(GL)$ and $\s\in\Irr(G)$ we have $\pi \rt \s = \widetilde{\pi} \rt \s$ and $\pi_1 \times \pi_2 \rt \s = \pi_2 \times \pi_1 \rt \s$ in $R(G)$.

For $\sigma \in \Irr(G_{n})$ and $\alpha=(n_1,\ldots,n_k)$ such that $n_1+\ldots+n_k\le n$ we denote by $r_{\alpha}(\sigma)$ the normalized Jacquet module of $\sigma$ with respect to the parabolic subgroup $P_{\alpha}$ with the Levi subgroup equal to $GL(n_1, F) \times \ldots \times GL(n_k,F)\times G_{n'}$. For $\alpha=(m)$ we identify $r_{(m)}(\sigma)$ with its semisimplification in $R(GL(m,F)) \otimes R(G_{n-m})$ and consider
\begin{equation*}
\mu^{\ast}(\sigma) = 1 \otimes \sigma + \sum_{m=1}^{n} r_{(m)}(\sigma) \in R(GL) \otimes R(G).
\end{equation*}
Similarly, for $\pi\in\Irr(GL)$, we denote by  $r_{(m)}(\pi)$ the normalized Jacquet module of $\pi$ with respect to the standard parabolic subgroup having the Levi factor equal to $GL(m, F) \times GL(n-m,F)$. We identify it with its semisimplification and define $m^*(\pi)=\sum_{m=0}^n r_{(m)}(\pi)$.
The mappings $\mu^*$ and $m^*$ extend to $R(G)$ and $R(GL)$ respectively. We define $M^*:R(GL)\to R(GL)\otimes R(GL)$ by 
$$M^*=(m\otimes 1)\circ (\sim\otimes m^*)\circ s \circ m^*$$
where $m$ denotes $\times:R(GL)\otimes R(GL)\to R(GL)$ and $s: \sum x_i\otimes y_i \mapsto \sum y_i\otimes x_i$.
The following theorem is the main result of \cite{Tad5}.
\begin{theorem}
For $\pi\in R(GL)$ and $\sigma\in R(G)$
$$\mu^*(\pi\rtimes\sigma)=M^*(\pi)\rtimes\mu^*(\sigma) $$
where the right hand side is determined by $(\pi_1\otimes\pi_2)\rtimes(\pi'\otimes\sigma')=(\pi_1\times\pi')\otimes(\pi_2\rtimes\sigma')$.
\end{theorem}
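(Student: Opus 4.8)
The plan is to deduce the formula from the geometric lemma by a single, if intricate, double-coset computation. Both sides of the asserted identity are additive in $\pi\in R(GL)$ and in $\sigma\in R(G)$ --- the functors $\pi\mapsto\pi\rtimes\sigma$ and $\sigma\mapsto\pi\rtimes\sigma$ are exact, and $M^*,\mu^*,m^*$ are compositions of additive maps --- so it suffices to treat irreducible $\pi\in\Irr(GL(m,F))$ and $\sigma\in\Irr(G_{n'})$, with $n=m+n'$. Writing $\mu^*(\pi\rtimes\sigma)=1\otimes(\pi\rtimes\sigma)+\sum_{k=1}^{n}r_{(k)}(\pi\rtimes\sigma)$, I would note that the leading term is reproduced by the product of the trivial summands $1\otimes\pi$ of $M^*(\pi)$ and $1\otimes\sigma$ of $\mu^*(\sigma)$, and then compute each $r_{(k)}(\pi\rtimes\sigma)$ separately. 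Since $\pi\rtimes\sigma=\mathrm{Ind}_{P_{(m)}}^{G_n}(\pi\otimes\sigma)$, the geometric lemma (in the form of Bernstein--Zelevinsky and Casselman) expresses $r_{(k)}(\pi\rtimes\sigma)$ as a sum, over a set of representatives $w$ of the double cosets $W_{(k)}\backslash W(G_n)/W_{(m)}$, of the terms obtained by inducing, up to the Levi $GL(k,F)\times G_{n-k}$, the $w$-twist of $r_{\theta_w}(\pi\otimes\sigma)$, where $\theta_w$ denotes the standard parabolic subgroup of $M_{(m)}$ with Levi $M_{(m)}\cap w^{-1}M_{(k)}w$; here all functors are unnormalised.

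Two structural facts drive the computation. First, for both $Sp(2n,F)$ and $SO(2n+1,F)$ the Weyl group $W(G_n)$ is the full group of signed permutations, so the double-coset combinatorics is uniform in the two cases. Second, $M_{(m)}=GL(m,F)\times G_{n'}$ is a direct product, hence every standard Levi of $M_{(m)}$ --- in particular the one inside $\theta_w$ --- is a product $L_1\times L_2$ with $L_1\subseteq GL(m,F)$ and $L_2\subseteq G_{n'}$, so $r_{\theta_w}(\pi\otimes\sigma)$ factors as a summand of an iterated Jacquet module of $\pi$ tensored with a summand of a Jacquet module of $\sigma$.

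The combinatorial core is to classify the relevant double cosets. Using the standard length-minimisation for double cosets of Weyl groups, each double coset has a unique representative of minimal length, and I would show these are in bijection with the data of: an ordered partition $m=m_1+m_2+m_3$ of the $GL(m,F)$-block into three consecutive sub-blocks, together with a summand $\sigma^{(1)}\otimes\sigma^{(2)}$ of $\mu^*(\sigma)$ with $\sigma^{(1)}\in R(GL(\ell,F))$, subject to $m_1+m_3+\ell=k$. The minimal representative $w$ attached to such data leaves the first sub-block (size $m_1$) inside the $GL$-factor, carries the third sub-block (size $m_3$) into the $GL$-factor but twisted by the involution $g\mapsto {}^{t}g^{-1}$ (i.e.\ made contragredient), sends the middle sub-block (size $m_2$) down into the classical factor $G_{n-k}$, and pulls $\sigma^{(1)}$ up into the $GL$-factor. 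Matching the threefold cut of $\pi$ against the definition $M^*=(m\otimes 1)\circ(\sim\otimes m^*)\circ s\circ m^*$ --- the rightmost $m^*$ peeling off the size-$m_3$ block at the far end, which $\sim$ then replaces by its contragredient, while the second $m^*$ cuts the remaining block into the size-$m_1$ and size-$m_2$ parts --- one reads off that the contribution of $w$ is exactly
$$\big(\widetilde{\pi^{(2)}}\times\pi^{(1,1)}\times\sigma^{(1)}\big)\otimes\big(\pi^{(1,2)}\rtimes\sigma^{(2)}\big),$$
where $\pi^{(1,1)},\pi^{(1,2)},\pi^{(2)}$ are the pieces of sizes $m_1,m_2,m_3$. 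Summing over all $w$ and over $k$ reassembles $M^*(\pi)\rtimes\mu^*(\sigma)$ under the pairing $(\pi_1\otimes\pi_2)\rtimes(\pi'\otimes\sigma')=(\pi_1\times\pi')\otimes(\pi_2\rtimes\sigma')$; since $\times$ is commutative in $R(GL)$, the order of the three factors in the $GL$-part is immaterial.

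The remaining work is normalisation bookkeeping: the geometric lemma is stated with unnormalised functors, whereas $M^*,\mu^*,\times$ and $\rtimes$ use normalised induction and restriction, so one must check that the half-sums of roots attached to $P_{(m)}$, $P_{(k)}$ and the intermediate parabolics cancel in the identity, a direct computation with the root datum of type $B_n/C_n$. I expect the main obstacle to be the combinatorial step of the previous paragraph: one must verify that the list of minimal double-coset representatives is complete and irredundant, and --- crucially --- that each representative twists exactly one of the three $GL$-sub-blocks by transpose-inverse and none of the others, since the single occurrence of $\sim$ in the definition of $M^*$ is precisely what distinguishes the correct formula from its naive analogues.
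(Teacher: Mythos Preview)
The paper does not prove this statement: it is quoted verbatim as ``the main result of \cite{Tad5}'' and used as a black box throughout. There is therefore no proof in the paper to compare your attempt against.

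That said, your outline is essentially the strategy of Tadi\'c's original argument in \cite{Tad5}: reduce to irreducibles by exactness, apply the Bernstein--Zelevinsky geometric lemma to $r_{(k)}(\mathrm{Ind}_{P_{(m)}}^{G_n}(\pi\otimes\sigma))$, parametrise the double cosets $W_{(k)}\backslash W(G_n)/W_{(m)}$ for the signed permutation Weyl group, and match the resulting contributions against the combinatorial definition of $M^*$. One small imprecision: your bijection for the minimal double-coset representatives should be with the data $(m_1,m_2,m_3,\ell)$ subject to $m_1+m_3+\ell=k$, not with the further choice of a constituent $\sigma^{(1)}\otimes\sigma^{(2)}$ of $r_{(\ell)}(\sigma)$; the latter arises only after decomposing the Jacquet module attached to a fixed $w$, not from the double-coset combinatorics itself. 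With that correction the sketch is sound, and the normalisation check you flag at the end is indeed routine once the combinatorics is in place.
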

The Geometric Lemma implies that $m^*$ is multiplicative, i.e. $m^*(\pi_1\times\pi_2)=m^*(\pi_1)\times m^*(\pi_2)$. Together with the definition of $M^*$, this implies $M^*$ is multiplicative.

We recall the Frobenius reciprocity:
$$ \text{Hom}_{G_n}(\pi,\delta_1\times\ldots\times\delta_k\rtimes\tau)\cong \text{Hom}_{M_{\alpha}}(r_{\alpha}(\pi),\delta_1\otimes\ldots\otimes\delta_k\otimes\tau). $$
Here $P_{\alpha} = M_{\alpha}N_{\alpha}$ is the standard parabolic subgroup of $G_n$ corresponding to the $k$-tuple $\alpha$.

The character $|det(g)|_F$ of $GL(n,F)$ is denoted by $\nu$, where $|\text{ }|_F$ is the normalized absolute value on $F$. 
If $x,y$ are real numbers such that $y-x\in\mathbb{N}_0$, we denote the set $\{x,x+1,\ldots,y\}$ by $[x,y]$. 
Let $\rho$ be an irreducible cuspidal representation of $GL(n, F)$. 
Similarly, we refer to the set $\{ \rho, \nu \rho, \ldots, \nu^{m} \rho \}$ as a segment of cuspidal representations, and denote it by $[ \rho, \nu^{m} \rho]$.
Attached to each such segment is an irreducible essentially square-integrable representation $\delta ([\rho, \nu^{m} \rho ])$ of $GL(m \cdot n, F)$, which is the unique irreducible subrepresentation of $\nu^{m} \rho \times \ldots \times \nu \rho \times \rho$. 
It is well-known that every irreducible essentially square-integrable representation $\delta \in \Irr(GL(n,F))$ is of the form $\delta([\nu^{x} \rho, \nu^{y} \rho])$, for an irreducible cuspidal representation $\rho \in \Irr(GL(n',F))$ and $x, y \in \mathbb{R}$ such that $y - x$ is a non-negative integer.
For an irreducible essentially square-integrable representation $\delta$, there is a unique $e(\delta) \in \mathbb{R}$ such that $\nu^{- e(\delta)} \delta$ is unitarizable. Note that $e(\delta([\nu^{x} \rho, \nu^{y} \rho])) = \frac{x+y}{2}$. We denote by $1$ the one dimensional representation of the trivial group and put $\delta([\nu^{x} \rho, \nu^{y} \rho]) = 1$ if $y=x-1$.

In the paper we favour the subrepresentation over the quotient version of the Langlands classification, which we now recall. Let $\delta_1, \ldots, \delta_l $ be the irreducible essentially square-integrable representations of general linear groups.
If $e(\delta_1) \leq \ldots \leq e(\delta_l)$, then $\delta_1\times\ldots\times\delta_l$ has a unique irreducible (Langlands) subrepresentation which we denote by $L(\delta_1,\ldots,\delta_l)$. 
If $\tau \in \Irr(G_{n'})$ is a tempered representation and $e(\delta_1) \leq \ldots \leq e(\delta_l) < 0$, then $\delta_{1} \times \ldots \times \delta_{l} \rtimes \tau$ has a unique irreducible (Langlands) subrepresentation which we denote by $L(\delta_{1}, \ldots, \delta_{l} ; \tau)$.
Every irreducible non-tempered representation of $GL(n,F)$ (resp. $G_{n'}$) is equal to $L(\delta_1,\ldots,\delta_l)$ (resp. $L(\delta_{1}, \ldots, \delta_{l} ; \tau)$) for a unique choice of $\delta_1,\ldots,\delta_l$ (and $\tau$).

Throughout the paper we fix cuspidal representations $\rho\in\Irr(GL(n,F))$ and $\s_c\in\Irr(G_{n'})$. 
\begin{dfn}
A ladder representation is a representation of the form $$L(\delta([\nu^{x_1}\rho,\nu^{y_1}\rho]),\ldots,\delta([\nu^{x_k}\rho,\nu^{y_k}\rho]))$$
for a positive integer $k$, real numbers $x_1,\ldots,x_k$ and $y_1,\ldots,y_k$ such that $x_1<\ldots<x_k$, $y_1<\ldots<y_k$ and $x_i\le y_i$ for $i=1,\ldots,k$.
\end{dfn}
Now let $\pi=L(\delta([\nu^{x_1}\rho,\nu^{y_1}\rho]),\ldots,\delta([\nu^{x_k}\rho,\nu^{y_k}\rho]))$ be a ladder representation. We will frequently need to consider the Jacquet modules of $\pi \rtimes \sigma_c$ taken with respect to a standard maximal parabolic subgroup. From \cite[Theorem 2.1]{KretLapid} we see
\begin{gather}
    \label{lad}
    m^*(\pi)= 
    \displaystyle \sum_{\Lad(\pi)} L(\delta([\nu^{c_1+1}\rho,\nu^{y_1}\rho]),\ldots,\delta([\nu^{c_k+1}\rho,\nu^{y_k}\rho]))\otimes  \\ \otimes L(\delta([\nu^{x_1}\rho,\nu^{c_1}\rho]),\ldots,\delta([\nu^{x_k}\rho,\nu^{c_k}\rho])) \nonumber
\end{gather}
where $\Lad(\pi)$ denotes the set of all $k$-tuples $(c_1,\ldots,c_k)$ of real numbers for which the following conditions hold:
\begin{itemize}
    \item $c_1<\ldots<c_k$,
    \item $x_i-1 \le c_i \le y_i$ for $i=1,\ldots,k$,
    \item $c_i-x_i\in\Z$ for $i=1,\ldots,k$.
\end{itemize}
We define $\Lad(\pi)'$ as the set of all pairs $\big((c_1,\ldots,c_k),(d_1,\ldots,d_k)\big)$ in $\Lad(\pi)\times\Lad(\pi)$ such that $c_i\le d_i$ for $i=1,2,\ldots,k$.
Combining (\ref{lad}) with \cite[Theorem 5.4]{Tad5} we have
\begin{gather}
    \label{struct}
    \mu^*(\pi\rtimes\sigma_c)=  \sum\limits_{\Lad(\pi)'} L(\delta([\nu^{-c_k}\widetilde{\rho},\nu^{-x_k}\widetilde{\rho}]),\ldots,\delta([\nu^{-c_1}\widetilde{\rho},\nu^{-x_1}\widetilde{\rho}]))\times \\
    \times L(\delta([\nu^{d_1+1}\rho,\nu^{y_1}\rho]),\ldots,\delta([\nu^{d_k+1}\rho,\nu^{y_k}\rho])) \otimes  \nonumber \\
    L(\delta([\nu^{c_1+1}\rho,\nu^{d_1}\rho]),\ldots,\delta([\nu^{c_k+1}\rho,\nu^{d_k}\rho])) \rtimes \s_c \nonumber.
\end{gather}
For a positive integer $t$ and real numbers $a_1,\ldots,a_t$ and $b_1,\ldots,b_t$ such that $ \frac{1}{2}\le a_1<\ldots<a_t$, $b_1<\ldots<b_t$ and $a_i\le b_i$ for $i=1,\ldots,t$, we define the ladder representation $$\pi_L=L(\delta([\nu^{a_1}\rho,\nu^{b_1}\rho]),\ldots,\delta([\nu^{a_t}\rho,\nu^{b_t}\rho])).$$ 
The goal of this article is to determine the composition series of $\pi_L\rtimes\s_c$.

For selfcontragredient $\rho$, there is a unique non-negative real number $x$ such that $\nu^x\rho\rtimes\sigma_c$ reduces. Otherwise, $\nu^x\rho\rtimes\sigma_c$ is irreducible for every real number $x$.

An irreducible representation $\sigma$ of $G_n$ is called
strongly positive if
\begin{equation*}
\sigma \hookrightarrow \nu^{x_{1}} \rho_{1} \times \ldots \times \nu^{x_{k}} \rho_{k} \rtimes \sigma_{cusp},
\end{equation*}
(where $\rho_i$, $\sigma_{cusp}$ are unitary cuspidal representations) implies $x_1,\ldots ,x_k$ are positive. By \cite{Arthur} and \cite[Th\'{e}or\`{e}me~3.1.1]{Moe2}, if $\nu^{x} \rho$ appears in the cuspidal support of $\sigma$, then $\rho$ is selfcontragredient and $2x \in \mathbb{Z}$.

Strongly positive representations play an important role in the classification of discrete series by M{\oe}glin-Tadi\'{c} \cite{Moe2,MT1}. 
The classification of irreducible strongly positive square-integrable genuine representations of metaplectic groups over $F$ is obtained in \cite[Theorem 1.2]{Matic3} using a purely algebraic approach. 
These results can be applied in the case of classical $p$-adic groups in a completely analogous manner. Every strongly positive representation whose cuspidal support is contained in $\{\nu^x\rho : x\in\R\}$ for some irreducible cuspidal selfcontragredient representation $\rho\in \Irr(GL(n,F))$ can be realized in the unique way as the unique irreducible subrepresentation of the induced representation
\begin{equation*} 
 \delta([\nu^{\alpha -r +1 } \rho, \nu^{x_{1}} \rho])\times\ldots\times\delta([\nu^{\alpha } \rho, \nu^{x_{r}} \rho])  \rtimes \sigma_{cusp}
\end{equation*}
where $\sigma_{cusp}$ is an irreducible cuspidal representation of $G_{n'}$, $\alpha$ is the unique positive number such that $\nu^{\alpha} \rho \rtimes \sigma_{cusp}$ reduces,
$r = \lceil \alpha \rceil$ and $-\frac{1}{2} \le x_1 < x_2 < \ldots < x_r$, $x_i - \alpha \in \mathbb{Z}$ for $i = 1, \ldots,r$. Here $\lceil \alpha \rceil$ denotes the smallest integer which is not smaller than $\alpha$. 

\section{\large{Tempered subquotients of $\pi_L\rtimes\s_c$}}
In this section we determine the tempered subquotients of $\pi_L\rtimes\s_c$. These results will be applied in the following section, with the goal of determining non-tempered subquotients of $\pi_L\rtimes\s_c$.
Namely, by the Langlands classification,  every non-tempered subquotient of $\pi_L\rtimes\sigma_c$ is of the form $L(\delta_1,\ldots,\delta_k;\tau)$.
If $L(\delta_1,\ldots,\delta_k;\tau)\le\pi_L\rtimes\s_c$, then the transitivity of Jacquet modules implies $L(\delta_1,\ldots,\delta_k)\otimes\tau\le\mu^*(\pi_L\rtimes\s_c)$. Now (\ref{struct}) implies that there exists $\big((c_1,\ldots,c_t),(d_1,\ldots,d_t)\big)\in \Lad(\pi_L)'$ such that $$\tau\le   L(\delta([\nu^{c_1+1}\rho,\nu^{d_1}\rho]),\ldots,\delta([\nu^{c_t+1}\rho,\nu^{d_t}\rho])) \rtimes \s_c.$$
Note that the exponents in the cuspidal support of a ladder representation on the general linear part are positive since $\frac{1}{2}\le a_1\le c_1+1$.

Note that any tempered subquotient of $\pi_L\rtimes\s_c$ is necessarily strongly positive.
To see this, suppose that $\pi_L \rtimes \sigma_c$ contains an irreducible tempered subquotient $\tau$ which is not strongly positive. Then
there exists an irreducible tempered representation $\tau'$ and $a,b\in\R$, $a\leq 0$, $a + b \geq 0$ such that $\tau$ is a subrepresentation of $\delta([\nu^a \rho,
\nu^b \rho]) \rtimes \tau'$.
If $a$ and $b$ are integers, then $\rho \in [\pi_L \rtimes \sigma_c]$, which is a contradiction. 
If $a$ and $b$ are half-integers, then $[\pi_L \rtimes \sigma_c]$ contains the multiset $\{\nu^{\frac{1}{2}}\rho,\nu^{\frac{1}{2}}\rho\}$, which is again a contradiction.

The following theorem determines when $\pi_L\rtimes\sigma_c$ contains a tempered subquotient. 
\begin{theorem}
\label{tm1}
The induced representation $\pi_L\rtimes\s_c$ has a tempered irreducible subquotient if and only if there exists $\alpha>0$ such that $\nu^{\alpha}\rho\rtimes\sigma_c$ reduces and $a_i=\alpha-t+i$ for $i=1,2,\ldots,t$.

In that case the tempered irreducible subquotient is unique; it is the unique irreducible subrepresentation of $\pi_L\rtimes\s_c$ and is isomorphic to the  strongly positive representation characterised as the unique irreducible subrepresentation of 
\begin{gather}
    \delta([\nu^{\alpha -t +1 } \rho, \nu^{b_{1}} \rho])\times\ldots\times\delta([\nu^{\alpha } \rho, \nu^{b_{t}} \rho])  \rtimes \sigma_{c}. \label{zz}
\end{gather}
\end{theorem}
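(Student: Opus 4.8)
The plan is to prove the two directions separately, using the structural formula (\ref{struct}) throughout. For the ``only if'' direction, suppose $\tau\le\pi_L\rtimes\s_c$ is an irreducible tempered subquotient. By the discussion preceding the theorem, $\tau$ is strongly positive, so $\rho$ must be selfcontragredient and there is a unique $\alpha>0$ with $\nu^\alpha\rho\rtimes\s_c$ reducible (if no such $\alpha$ existed, the half-integer analysis would force a repeated $\nu^{1/2}\rho$ in the cuspidal support, already excluded). Since $\tau\le\pi_L\rtimes\s_c$, transitivity of Jacquet modules together with (\ref{struct}) produces a pair $\big((c_1,\ldots,c_t),(d_1,\ldots,d_t)\big)\in\Lad(\pi_L)'$ such that the GL-part $L(\delta([\nu^{-c_t}\widetilde\rho,\nu^{-x_t}\widetilde\rho]),\ldots)\times L(\delta([\nu^{d_1+1}\rho,\nu^{y_1}\rho]),\ldots)$ is trivial and $\tau\le L(\delta([\nu^{c_1+1}\rho,\nu^{d_1}\rho]),\ldots,\delta([\nu^{c_t+1}\rho,\nu^{d_t}\rho]))\rtimes\s_c$. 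Triviality of the first factor forces $c_i=x_i-1=a_i-1$ and $d_i=y_i=b_i$ for all $i$. Hence $\tau$ embeds in $\delta([\nu^{a_1}\rho,\nu^{b_1}\rho])\times\ldots\times\delta([\nu^{a_t}\rho,\nu^{b_t}\rho])\rtimes\s_c$, i.e. the ladder segments themselves. Now I would compare this with the M\oe glin--Tadi\'c description of strongly positive representations recalled at the end of Section \ref{s2}: a strongly positive representation with cuspidal support in $\{\nu^x\rho\}$ embeds in a product of segments $\delta([\nu^{\alpha-r+1}\rho,\nu^{x_1}\rho])\times\ldots\times\delta([\nu^\alpha\rho,\nu^{x_r}\rho])\rtimes\s_{cusp}$ with lower endpoints exactly $\alpha-r+1<\alpha-r+2<\ldots<\alpha$ and $r=\lceil\alpha\rceil$. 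Matching the strictly increasing lower endpoints $a_1<\ldots<a_t$ of $\pi_L$ against this rigid pattern (and using uniqueness of the embedding data) forces $t=r$ and $a_i=\alpha-t+i$; a short argument is needed to see that no ``mismatched'' embedding into the ladder segments is possible, which is where the bulk of the work lies.

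For the ``if'' direction, assume $\alpha>0$ with $\nu^\alpha\rho\rtimes\s_c$ reducible and $a_i=\alpha-t+i$. Then $r=\lceil\alpha\rceil=t$ (since $a_1=\alpha-t+1\ge\tfrac12$ forces $\alpha>t-1$, and $a_t=\alpha\le b_t$ with the integrality $a_i-\alpha\in\Z$ forcing $\alpha-t+1>-\tfrac12$ on the nose... actually $\alpha-t+1=a_1\ge\tfrac12>-\tfrac12$, so the strongly positive parameters $x_i=b_i$ satisfy $-\tfrac12\le x_1<\ldots<x_t$ and $x_i-\alpha\in\Z$). Thus the induced representation (\ref{zz}) is precisely of the form appearing in the M\oe glin--Tadi\'c parametrization, and its unique irreducible subrepresentation $\sigma_{sp}$ is strongly positive. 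It remains to show $\sigma_{sp}\le\pi_L\rtimes\s_c$. I would do this by realizing both $\pi_L$ and $\sigma_{sp}$ inside the big induced representation $\delta([\nu^{a_1}\rho,\nu^{b_1}\rho])\times\ldots\times\delta([\nu^{a_t}\rho,\nu^{b_t}\rho])\rtimes\s_c$: since $\pi_L$ is the unique irreducible subrepresentation of $\delta([\nu^{a_1}\rho,\nu^{b_1}\rho])\times\ldots\times\delta([\nu^{a_t}\rho,\nu^{b_t}\rho])$ (by the Langlands-subrepresentation convention, after reordering by $e(\delta_i)$, which is legitimate in $R(G)$), parabolic induction in stages gives $\pi_L\rtimes\s_c\hookrightarrow\delta([\nu^{a_1}\rho,\nu^{b_1}\rho])\times\ldots\times\delta([\nu^{a_t}\rho,\nu^{b_t}\rho])\rtimes\s_c$; and the same big representation contains $\sigma_{sp}$ as a subrepresentation by definition of (\ref{zz}). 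This alone does not give containment, so I would instead argue via Jacquet modules: compute $\mu^*(\pi_L\rtimes\s_c)$ via (\ref{struct}) and exhibit in it the term with $c_i=a_i-1$, $d_i=b_i$, whose third factor is exactly $\delta([\nu^{a_1}\rho,\nu^{b_1}\rho])\times\ldots\times\delta([\nu^{a_t}\rho,\nu^{b_t}\rho])\rtimes\s_c$ (first two factors trivial); since $\sigma_{sp}$ is a subrepresentation of this, Frobenius reciprocity gives a nonzero map $r_{(\cdots)}(\pi_L\rtimes\s_c)\to(\text{segments})\otimes\s_c$ through which $\sigma_{sp}$'s appearance is detected, and combined with the fact that $\sigma_{sp}$ is \emph{strongly positive}---hence (by M\oe glin--Tadi\'c) the \emph{unique} irreducible tempered representation with that cuspidal support---we conclude $\sigma_{sp}\le\pi_L\rtimes\s_c$ because its cuspidal support equals $[\pi_L\rtimes\s_c]$.

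For uniqueness and the final claims: any tempered subquotient is strongly positive (shown before the theorem) with cuspidal support equal to $[\pi_L\rtimes\s_c]$; since strongly positive representations are determined by their cuspidal support, there is at most one, and by the above it is $\sigma_{sp}$, the unique irreducible subrepresentation of (\ref{zz}). That $\sigma_{sp}$ is a subrepresentation of $\pi_L\rtimes\s_c$ (not merely a subquotient) follows because $\pi_L\rtimes\s_c\hookrightarrow\delta([\nu^{a_1}\rho,\nu^{b_1}\rho])\times\ldots\times\delta([\nu^{a_t}\rho,\nu^{b_t}\rho])\rtimes\s_c$ and the latter has $\sigma_{sp}$ as its unique irreducible subrepresentation; intersecting with $\pi_L\rtimes\s_c$ and using that the socle is irreducible (a consequence of $\sigma_{sp}$ appearing with multiplicity one, which can be read off (\ref{struct})) identifies the socle of $\pi_L\rtimes\s_c$ with $\sigma_{sp}$.

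\textbf{Main obstacle.} The delicate point is the rigidity argument in the ``only if'' direction: deducing from $\tau\hookrightarrow\delta([\nu^{a_1}\rho,\nu^{b_1}\rho])\times\ldots\times\delta([\nu^{a_t}\rho,\nu^{b_t}\rho])\rtimes\s_c$ with $\tau$ strongly positive that necessarily $a_i=\alpha-t+i$. This requires showing that the strictly increasing lower endpoints of the ladder must coincide with the consecutive integers/half-integers $\alpha-t+1,\ldots,\alpha$ dictated by the M\oe glin--Tadi\'c classification, ruling out all other configurations; I expect this to need a careful analysis of $\mu^*$ applied to $\tau$ and a comparison of minimal/maximal exponents, rather than a one-line invocation of uniqueness. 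The second subtle point is upgrading ``subquotient'' to ``subrepresentation'' and establishing multiplicity one, both of which should follow from a close reading of which terms in (\ref{struct}) can contribute $\sigma_{sp}$, using that $\alpha\ge\tfrac12$ and the $a_i\ge\tfrac12$ hypothesis to exclude competing terms.
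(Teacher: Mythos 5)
Your ``if'' direction and the uniqueness/subrepresentation claims essentially coincide with the paper's argument: embed $\pi_L\rtimes\sigma_c$ into (\ref{zz}), use that (\ref{zz}) has a unique irreducible subrepresentation (the strongly positive one), and deduce multiplicity one from the fact that every subquotient of $\pi_L\rtimes\sigma_c$ is a subquotient of (\ref{zz}). That part is sound.

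The ``only if'' direction, however, has a genuine gap exactly where you say ``the bulk of the work lies.'' Your application of (\ref{struct}) with trivial GL-part only returns the tautology $\tau\le\pi_L\rtimes\sigma_c$ (and upgrading this to an \emph{embedding} into the product of segments is unjustified), and the plan you then sketch --- matching the cuspidal support, or the lower endpoints, of $\tau$ against the M\oe glin--Tadi\'{c} pattern $\alpha-t+1,\ldots,\alpha$ --- cannot work as stated, because a multiset of exponents does not decompose uniquely into segments with strictly increasing left and right endpoints. Concretely, for $\alpha=2$ the strongly positive representation with M\oe glin--Tadi\'{c} segments $[\nu\rho,\nu\rho]$ and $[\nu^{2}\rho,\nu^{3}\rho]$ has the same cuspidal support as $\delta([\nu\rho,\nu^{3}\rho])\rtimes\sigma_c$ (the ladder with $t=1$, $a_1=1$, $b_1=3$), yet the theorem asserts that this induced representation has \emph{no} tempered subquotient since $a_1\neq\alpha$; so a support-matching argument would prove a false statement. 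The paper's route is different and is what you need: since $\tau$ is strongly positive it embeds into its M\oe glin--Tadi\'{c} standard module, hence into $L(\delta([\nu^{\alpha-r+1}\rho,\nu^{x_1}\rho]),\ldots,\delta([\nu^{\alpha}\rho,\nu^{x_r}\rho]))\rtimes\sigma_c$; Frobenius reciprocity and transitivity then place $L(\delta([\nu^{\alpha-r+1}\rho,\nu^{x_1}\rho]),\ldots,\delta([\nu^{\alpha}\rho,\nu^{x_r}\rho]))\otimes\sigma_c$ inside $\mu^*(\pi_L\rtimes\sigma_c)$, and in (\ref{struct}) the cuspidality of $\sigma_c$ forces $c_i=d_i$ while the positivity of the exponents on the left forces $c_i=a_i-1$; hence this Langlands representation equals $\pi_L$, and equality of Langlands data yields $a_i=\alpha-t+i$. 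This exploits the full standard-module data of $\tau$, not merely its cuspidal support, which is precisely what your sketch is missing.
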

\begin{proof}
Assume that the induced representation $\pi_L\rtimes\s_c$ has a tempered irreducible subquotient. We denote it by $\tau$.
We have noted above that $\tau$ is necessarily a strongly positive representation. Then there exists a unique positive real number $\alpha$ such that $\nu^{\alpha}\rho\rtimes\sigma_c$ reduces, as seen in section \ref{s2}. Note that this implies $\rho\simeq \widetilde{\rho}$. We denote $r=\lceil\alpha\rceil$.
Thus there exists the unique increasing sequence of real numbers $x_1,\ldots,x_r$ such that $\alpha-r+i-1\le x_i$ and $x_i-\alpha\in\Z$ for $i=1,2,\ldots,r$ and $\tau$ is the unique irreducible subrepresentation of 
\begin{equation}
\label{tau}
\delta([\nu^{\alpha -r +1 } \rho, \nu^{x_{1}} \rho])\times\ldots\times\delta([\nu^{\alpha } \rho, \nu^{x_{r}} \rho])  \rtimes \sigma_{c}.
\end{equation}
Therefore, it is also a subrepresentation of 
$L(\delta([\nu^{\alpha -r +1 } \rho, \nu^{x_{1}} \rho]),\ldots,\delta([\nu^{\alpha } \rho, \nu^{x_{r}} \rho]) ) \rtimes \sigma_{c}.$ 
The Frobenius reciprocity and the transitivity of Jacquet modules imply that $$L(\delta([\nu^{\alpha -r +1 } \rho, \nu^{x_{1}} \rho]),\ldots,\delta([\nu^{\alpha } \rho, \nu^{x_{r}} \rho]) ) \otimes \sigma_{c}\le \mu^*(\pi_L\rtimes\s_c).$$
By the structural formula (\ref{struct}) there exists $\big((c_1,\ldots,c_t),(d_1,\ldots,d_t)\big)\in \Lad(\pi_L)'$ such that 
\begin{gather}
\label{prva}
    L(\delta([\nu^{\alpha -r +1 } \rho, \nu^{x_{1}} \rho]),\ldots,\delta([\nu^{\alpha } \rho, \nu^{x_{r}} \rho]) ) \le  \\
    L(\delta([\nu^{-c_t}\rho,\nu^{-a_t}\rho]),\ldots,\delta([\nu^{-c_1}\rho,\nu^{-a_1}\rho]))
    \times \nonumber \\ \times L(\delta([\nu^{d_1+1}\rho,\nu^{b_1}\rho]),\ldots,\delta([\nu^{d_t+1}\rho,\nu^{b_t}\rho])) \nonumber
\end{gather}
and 
\begin{gather}
\label{druga}
    \s_c\le  L(\delta([\nu^{c_1+1}\rho,\nu^{d_1}\rho]),\ldots,\delta([\nu^{c_t+1}\rho,\nu^{d_t}\rho])) \rtimes \s_c.
\end{gather}
Note that the exponents in a cuspidal support of a representation on the left hand side of (\ref{prva}) are positive and $-a_i,-c_i<0$ for indices $i\in\{1,2,\ldots,t\}$ for which $\delta([\nu^{-c_i}\rho,\nu^{-a_i}\rho])\not\simeq 1$. Since representations in (\ref{prva}) have the same cuspidal support, we conclude that $c_i=a_i-1$ for $i=1,2,\ldots,t$. 
From (\ref{druga}) we get $c_i=d_i$ for $i=1,2,\ldots,t$, since $\sigma_c$ is a cuspidal representation. Thus (\ref{prva}) gives
$$ L(\delta([\nu^{\alpha -r +1 } \rho, \nu^{x_{1}} \rho]),\ldots,\delta([\nu^{\alpha } \rho, \nu^{x_{r}} \rho]) ) \le 
 L(\delta([\nu^{a_1}\rho,\nu^{b_1}\rho]),\ldots,\delta([\nu^{a_t}\rho,\nu^{b_t}\rho])).$$
Since these representations are irreducible, we have the equality. Langlands classification now implies equality of the sets consisting of the essentially square-integrable representations defining them. Thus we have $a_i=\alpha-t+i$ and $b_i=x_{r-t+i}$ for $i=1,2,\ldots,t$.

If $a_i=\alpha-t+i$ for $i=1,2,\ldots,t$, then $$\pi_L\cong L(\delta([\nu^{\alpha -t +1 } \rho, \nu^{b_{1}} \rho]),\ldots,\delta([\nu^{\alpha } \rho, \nu^{b_{t}} \rho]) ). $$
Note that $\pi_L\rtimes\s_c$ is a subrepresentation of the representation (\ref{zz}). Since there exists $\alpha>0$ such that $\nu^{\alpha}\rho\rtimes\sigma_c$ reduces, we conclude that the representation (\ref{zz}) contains a unique irreducible tempered subquotient, denoted $\tau$. 
It is a strongly positive representation and the unique irreducible subrepresentation of (\ref{zz}), hence it is also a subrepresentation of $\pi_L\rtimes\s_c$. 
Every subquotient of $\pi_L\rtimes\sigma_c$ is a subquotient of representation (\ref{zz}), so $\tau$ appears in composition series of $\pi_L\rtimes\sigma_c$ with multiplicity one.
\end{proof}
\section{\large{Non-tempered subquotients of $\pi_L\rtimes\s_c$}}
In the last section we will determine non-tempered subquotients of $\pi_L\rtimes\s_c$ and thus obtain the complete composition series of a representation $\pi_L\rtimes\s_c$. 
Firstly, we give a characterization of reducibility of the induced representation $\pi_L\rtimes\sigma_c$ based on the results in \cite{LT}.
\begin{prop}
\label{propi}
The induced representation $\pi_L\rtimes\sigma_c$ is reducible if and only if $\nu^{x}\rho\rtimes\sigma_c$ reduces for some $\nu^x\rho\in [\pi_L]$.
\end{prop}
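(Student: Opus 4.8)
The plan is to deduce the proposition from the reducibility criterion of \cite{LT} for $\delta\rtimes\sigma_c$ with $\delta$ a ladder representation, exploiting the standing hypothesis $\frac{1}{2}\le a_1$ to collapse that criterion to the stated form. If $\rho$ is not selfcontragredient, then $\nu^x\rho\rtimes\sigma_c$ is irreducible for every real $x$ and, by \cite{LT}, $\pi_L\rtimes\sigma_c$ is irreducible as well, so both sides of the asserted equivalence fail and there is nothing to prove. Assume henceforth $\rho\simeq\widetilde{\rho}$, and let $\alpha$ be the unique non-negative real number with $\nu^{\alpha}\rho\rtimes\sigma_c$ reducible. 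Every element of $[\pi_L]$ equals $\nu^x\rho$ for some $x\ge a_1\ge\frac{1}{2}>0$, and $\nu^x\rho\rtimes\sigma_c$ reduces precisely when $x\in\{\alpha,-\alpha\}$; hence the condition ``$\nu^x\rho\rtimes\sigma_c$ reduces for some $\nu^x\rho\in[\pi_L]$'' is equivalent to the condition $(\star)$: $a_i\le\alpha\le b_i$ and $\alpha-a_i\in\Z$ for some $i\in\{1,\dots,t\}$, i.e. $\nu^{\alpha}\rho\in[\delta([\nu^{a_i}\rho,\nu^{b_i}\rho])]$ for some $i$.

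It therefore remains to prove that, under $\frac{1}{2}\le a_1$, the representation $\pi_L\rtimes\sigma_c$ is reducible if and only if $\delta([\nu^{a_i}\rho,\nu^{b_i}\rho])\rtimes\sigma_c$ is reducible for some $i$; the classical reducibility criterion for segment-induced representations then identifies the latter with $(\star)$, since each segment $[\nu^{a_i}\rho,\nu^{b_i}\rho]$ lies in the range $\ge\frac{1}{2}$ and hence its ``negative'' reducibility branch cannot occur. By \cite{LT}, the irreducibility of $\pi_L\rtimes\sigma_c$ is equivalent to the irreducibility of all the segment representations $\delta([\nu^{a_i}\rho,\nu^{b_i}\rho])\rtimes\sigma_c$ together with a finite list of conditions coupling a segment $[\nu^{a_i}\rho,\nu^{b_i}\rho]$ with a reflected segment $[\nu^{-b_j}\rho,\nu^{-a_j}\rho]$.

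The heart of the argument is a bookkeeping check that $\frac{1}{2}\le a_1<\dots<a_t$ forces every coupling condition to be automatically satisfied: each original segment then lies in the range $\ge\frac{1}{2}$ and each reflected one in the range $\le-\frac{1}{2}$, so for $i\ne j$ the two are never linked (a linking would require $a_i+a_j\le 1$ with $a_i\ne a_j$ both $\ge\frac{1}{2}$), while for $i=j$ a linking forces $a_i=\frac{1}{2}$, and one checks that the resulting reducibility of $\delta([\nu^{1/2}\rho,\nu^{b_i}\rho])\times\delta([\nu^{-b_i}\rho,\nu^{-1/2}\rho])$ influences $\pi_L\rtimes\sigma_c$ only when in addition $\alpha=\frac{1}{2}$ — a case already contained in $(\star)$. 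Granting this, \cite{LT} yields exactly the asserted equivalence.

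The step I expect to be the main obstacle is precisely this last one: controlling the coupling conditions of \cite{LT} and showing that, under the positivity hypothesis, none of them produces reducibility of $\pi_L\rtimes\sigma_c$ which is not already witnessed by a reducible $\nu^x\rho\rtimes\sigma_c$ with $\nu^x\rho\in[\pi_L]$; in particular, that the sole boundary configuration $a_i=\frac{1}{2}$ permitted by positivity is harmless unless $\nu^{1/2}\rho\rtimes\sigma_c$ itself reduces.
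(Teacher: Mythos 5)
Your overall strategy --- reduce everything to the Lapid--Tadi\'c criterion and use the hypothesis $a_1\ge\frac{1}{2}$ to kill the extra conditions --- is the same as the paper's, and several pieces are fine: the easy direction is exactly \cite[Theorem~1.1]{LT}, the non-selfcontragredient case is handled correctly, and the translation of ``$\nu^{x}\rho\rtimes\sigma_c$ reduces for some $\nu^x\rho\in[\pi_L]$'' into your condition $(\star)$ is correct because every exponent in $[\pi_L]$ is positive. The gap sits precisely where you say you expect it. You paraphrase the criterion of \cite{LT} as ``irreducibility of each $\delta([\nu^{a_i}\rho,\nu^{b_i}\rho])\rtimes\sigma_c$ together with coupling conditions between segments and reflected segments'', and your closing step is a bookkeeping argument that the couplings are automatically satisfied. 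But with the couplings formulated as linkedness of $[\nu^{a_i}\rho,\nu^{b_i}\rho]$ with $[\nu^{-b_j}\rho,\nu^{-a_j}\rho]$, the case $i=j$, $a_i=\frac{1}{2}$ genuinely fails: those two segments are linked, so $\delta([\nu^{1/2}\rho,\nu^{b_i}\rho])\times\delta([\nu^{-b_i}\rho,\nu^{-1/2}\rho])$ is reducible, and your assertion that this ``influences $\pi_L\rtimes\sigma_c$ only when $\alpha=\frac{1}{2}$'' is exactly the statement that needs proof and is nowhere supplied. As written, the argument does not close.

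The paper avoids this entirely by quoting the criterion in its actual form: assuming $\nu^{x}\rho\rtimes\sigma_c$ is irreducible for every $\nu^x\rho\in[\pi_L]$, \cite[Theorem~1.2]{LT} says $\pi_L\rtimes\sigma_c$ is irreducible if and only if $(\pi_L)_+\times(\widetilde{\pi_L})_+$ is irreducible, where $(\cdot)_+$ retains only those segments whose minimal and maximal exponents have positive sum. Since $a_i+b_i\ge 2a_i\ge 1>0$ for every $i$, one gets $(\pi_L)_+\simeq\pi_L$ and $(\widetilde{\pi_L})_+\simeq 1$, so the product is $\pi_L$ itself, which is irreducible --- no linking analysis and no boundary case $a_i=\frac{1}{2}$ ever arises. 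Your intermediate reduction to the reducibility of the individual $\delta([\nu^{a_i}\rho,\nu^{b_i}\rho])\rtimes\sigma_c$ is also an unnecessary detour, since the criterion is stated directly in terms of the points $\nu^x\rho$ of the cuspidal support. To repair your proof, replace the guessed coupling conditions by the precise statement of \cite[Theorem~1.2]{LT} and observe that $(\widetilde{\pi_L})_+\simeq 1$.
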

\begin{proof}
  By Theorem 1.1 in \cite{LT}, if $\nu^{x}\rho\rtimes\sigma_c$ reduces for some $\nu^x\rho\in [\pi_L]$, then $\pi_L\rtimes\sigma_c$ is reducible.
  
  Conversely, assume that $\nu^{x}\rho\rtimes\sigma_c$ is irreducible for all $\nu^x\rho\in [\pi_L]$. 
  Since $\pi_L$ is a ladder representation, Theorem 1.2 in \cite{LT} implies that the induced representation $\pi_L\rtimes\sigma_c$ is irreducible if and only if $(\pi_{L})_{+}\times(\widetilde{\pi_L})_+$ is irreducible. 
  Here we use the notation from \cite[Definition 3.12.]{LT}: for a representation $\pi\simeq L(\delta(\Delta_1),\ldots,\delta(\Delta_k))$, let $(\pi)_+$ denote
  $L(\delta(\Delta_{i_1}),\ldots,\delta(\Delta_{i_l}))$, where $\{i_1,\ldots,i_l\}$ is a subset of $\{1,2,\ldots,k\}$ of all indices such that the sum of minimal and maximal exponent in $\Delta_i$ is positive. 
  From our assumption $a_1\ge\frac{1}{2}$ it follows that $(\pi_L)_+\simeq\pi_L$ and from \cite[Lemma 4.2]{Matic3} we see $\widetilde{\pi_L}\simeq L(\delta([\nu^{-b_t}\widetilde{\rho},\nu^{-a_t}\widetilde{\rho}]),\ldots,\delta([\nu^{-b_1}\widetilde{\rho},\nu^{-a_1}\widetilde{\rho}]))$ so $(\widetilde{\pi_L})_+\simeq1$. 
  Since $\pi_L$ is an irreducible representation, we get that the induced representation $\pi_L\rtimes\sigma_c$ is irreducible, thus finishing the proof.
\end{proof}
Since $\nu^{x}\rho\rtimes\sigma_c$ is irreducible for every $\nu^x\rho\in [\pi_L]$ for non-selfcontragredient $\rho$, the induced representation $\pi_L\rtimes\s_c$ is in that case irreducible and isomorphic to $$L(\delta([\nu^{-b_t}\rho,\nu^{-a_t}\rho]),\ldots,\delta([\nu^{-b_1}\rho,\nu^{-a_1}\rho]);\sigma_c).$$
For selfcontragredient $\rho$, there is a unique non-negative real number $x$ such that $\nu^x\rho\rtimes\sigma_c$ reduces and we denote it by $\alpha$. 
From Proposition \ref{propi} we see that for $\alpha=0$ the induced representation $\pi_L\rtimes\s_c$ is irreducible. 
Thus, in what follows we assume $\rho$ is selfcontragredient and $\alpha>0$. 

Recall that the non-tempered subquotients of induced representations from $R(G)$ are, according to the Langlands classification, parametrized by $L(\delta_1,\ldots,\delta_l;\tau)$, where $\delta_1, \ldots, \delta_l $ are irreducible essentially square-integrable representations of general linear groups such that $e(\delta_1) \leq \ldots \leq e(\delta_l) < 0$ and  $\tau \in \Irr(G_{n'})$ is a tempered representation.
Since $L(\delta_1,\ldots,\delta_l;\tau)\le\pi_L\rtimes\s_c$ and  $L(\delta_1,\ldots,\delta_l)\otimes\tau\le \mu^*(L(\delta_1,\ldots,\delta_l;\tau))$, the transitivity of the Jacquet modules and the structural formula (\ref{struct}) imply that there exists a pair 
$\big((c_1,\ldots,c_t),(d_1,\ldots,d_t)\big)\in \Lad(\pi_L)'$ such that 
\begin{gather}
    \label{a}
    L(\delta_1,\ldots,\delta_l)\le 
     L(\delta([\nu^{-c_t}\rho,\nu^{-a_t}\rho]),\ldots,\delta([\nu^{-c_1}\rho,\nu^{-a_1}\rho]))\times \\
    \times L(\delta([\nu^{d_1+1}\rho,\nu^{b_1}\rho]),\ldots,\delta([\nu^{d_t+1}\rho,\nu^{b_t}\rho])) \nonumber
\end{gather}
and
\begin{gather}
\label{b}
    \tau\le L(\delta([\nu^{c_1+1}\rho,\nu^{d_1}\rho]),\ldots,\delta([\nu^{c_t+1}\rho,\nu^{d_t}\rho])) \rtimes \s_c.
\end{gather}
Since $e(\delta_i)<0$ for $i=1,2,\ldots,l$, from (\ref{a}) we conclude $d_i=b_i$ for $i=1,2,\ldots,t$.
Namely, considering the cuspidal support of the representation on the right-hand side of (\ref{a}), we notice that there is no segment with both positive and negative exponents contained in it. 
Recall that assumed restrictions on the exponents give $-a_1\le -\frac{1}{2}$ and $d_1+1\ge c_1+1\ge a_1 \ge\frac{1}{2}$. If $d_1+1\ge\frac{3}{2}$, then the difference between the maximal negative and minimal positive exponent is greater than or equal to two. If $d_1+1=\frac{1}{2}$, then $c_1+1=a_1=\frac{1}{2}$, so $\delta([\nu^{-c_1}\rho,\nu^{-a_1}\rho])\simeq 1$. Now the maximal negative exponent is less than or equal to $-a_2$, which is less than or equal to  $-\frac{3}{2}$, so we have the same conclusion as in the first case. 

These conclusions reduce (\ref{a}) and (\ref{b}) to
\begin{gather*}
    L(\delta_1,\ldots,\delta_l)\le 
     L(\delta([\nu^{-c_t}\rho,\nu^{-a_t}\rho]),\ldots,\delta([\nu^{-c_1}\rho,\nu^{-a_1}\rho]))
\end{gather*}
and
\begin{gather} \label{t1}
    \tau\le L(\delta([\nu^{c_1+1}\rho,\nu^{b_1}\rho]),\ldots,\delta([\nu^{c_t+1}\rho,\nu^{b_t}\rho])) \rtimes \s_c,
\end{gather}
respectively.
Denote by $i_M$ (resp.~$i_m$) the maximal (resp.~minimal) index $i\in\{1,2,\ldots,t\}$ such that $\alpha\in [a_i, b_i]$.
Note that if $i_M$ exists, then $\alpha\in [a_i, b_i]$ for every $i\in[i_m,i_M]$, since $\alpha\in[a_{i_M},b_{i_m}]\subseteq[a_i,b_i]$. 
Further, denote by $k$ an arbitrary element of $[i_m,i_M]$.  We define $k_m$ to be a minimal index from $\{1,2,\ldots,t\}$ such that $\alpha-k+k_m\le b_{k_m}.$ 
Note that $\alpha-k+i\in[a_i,b_i],\forall i\in[k_m,k]$. 
This follows by inductively applying the following inequalities which are the consequence of $a_1<\ldots<a_t$ and $b_1<\ldots<b_t$:
\begin{gather*}
 b_{k_m+1}\ge b_{k_m}+1\ge \alpha-k+(k_m+1) \text{ and } a_{k-1}\le a_k-1\le \alpha-1.
\end{gather*}
Consequently, $\alpha-k+i$ are positive numbers for $i\in [k_m,k]$.
We define $\sigma_k$
as a unique irreducible subrepresentation of 
\begin{gather*}
    \delta([\nu^{\alpha-k+k_m}\rho,\nu^{b_{k_m}}\rho])\times\ldots\times\delta([\nu^{\alpha}\rho,\nu^{b_k}\rho]) \rtimes \s_c
\end{gather*}
and $\pi_k$, unless $k=t$ and $a_i=\alpha-t+i$ for $i=1,2,\ldots,t$, in which case we define it as
\begin{gather*}
    L(\delta([\nu^{-b_t}\rho,\nu^{-a_t}\rho]),\ldots,\delta([\nu^{-b_{k+1}}\rho,\nu^{-a_{k+1}}\rho]),\delta([\nu^{-\alpha+1}\rho,\nu^{-a_k}\rho]),\ldots, \\
    \delta([\nu^{-\alpha+k-k_m+1}\rho,\nu^{-a_{k_m}}\rho]),\delta([\nu^{-b_{k_m-1}}\rho,\nu^{-a_{k_m-1}}\rho]),\ldots,\delta([\nu^{-b_1}\rho,\nu^{-a_1}\rho]);\sigma_k).
\end{gather*}
Note that $\pi_k$ is well defined since $-b_{k+1}<-b_k<-\alpha+1$,  $b_{k_m-1}<\alpha-k+k_m-1$ because of the definition of index $k_m$ and $-(a_1+b_1)<0$.
\begin{theorem}
\label{tm2}
Assume that $\nu^{x}\rho\rtimes\sigma_c$ reduces for some $\nu^x\rho\in [\pi_L]$.
Then the induced representation $\pi_L\rtimes\s_c$ reduces. Furthermore, let $\alpha$ be the unique positive number such that $\nu^{\alpha}\rho\rtimes\sigma_c$ reduces. Then the semi-simplification of $\pi_L \rtimes \sigma_c$ is equal to 
\begin{itemize}
    \item[(i)] $\s_t + L(\delta([\nu^{-b_t}\rho,\nu^{-a_t}\rho]),\ldots,\delta([\nu^{-b_1}\rho,\nu^{-a_1}\rho]);\s_c) + \displaystyle\sum\limits_{k\in[i_m,t-1]} \pi_k,$ \\ if $a_i=\alpha-t+i$ for $i=1,2,\ldots,t$,
    \item[(ii)] $L(\delta([\nu^{-b_t}\rho,\nu^{-a_t}\rho]),\ldots,\delta([\nu^{-b_1}\rho,\nu^{-a_1}\rho]);\s_c) + \displaystyle\sum\limits_{k\in[i_m,i_M]} \pi_k, $ otherwise.
\end{itemize}
\end{theorem}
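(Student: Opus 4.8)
The plan is to prove reducibility directly from Proposition~\ref{propi} and then to determine the composition series by a two‑sided count: extract from the structural formula~(\ref{struct}) the finite list of irreducible representations that can possibly occur, show that every member of the list in~(i)/(ii) actually occurs, and check that each occurs with multiplicity one. Reducibility is immediate, since $\alpha\in[a_{i_m},b_{i_m}]$ gives $\nu^{\alpha}\rho\in[\pi_L]$ with $\nu^{\alpha}\rho\rtimes\sigma_c$ reducible, so Proposition~\ref{propi} applies; and the tempered factor $\sigma_t$ of case~(i) has already been produced and shown unique in Theorem~\ref{tm1}, so the remaining work concerns the non‑tempered factors.

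For the upper bound, recall from the discussion preceding the statement that every non‑tempered subquotient has the form $L(\delta_1,\ldots,\delta_l;\tau)$ with $\tau$ tempered, that~(\ref{struct}) forces $d_i=b_i$, that $L(\delta_1,\ldots,\delta_l)$ is then the irreducible representation $L(\delta([\nu^{-c_t}\rho,\nu^{-a_t}\rho]),\ldots,\delta([\nu^{-c_1}\rho,\nu^{-a_1}\rho]))$ for some $(c_1,\ldots,c_t)\in\Lad(\pi_L)$ with $c_i\le b_i$, and that $\tau\le L(\delta([\nu^{c_1+1}\rho,\nu^{b_1}\rho]),\ldots,\delta([\nu^{c_t+1}\rho,\nu^{b_t}\rho]))\rtimes\sigma_c$. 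As observed in Section~3 such a $\tau$ is strongly positive, hence by the classification recalled in Section~\ref{s2} it is the unique subrepresentation of $\delta([\nu^{\alpha-r+1}\rho,\nu^{y_1}\rho])\times\cdots\times\delta([\nu^{\alpha}\rho,\nu^{y_r}\rho])\rtimes\sigma_c$ with $r=\lceil\alpha\rceil$, $-\tfrac12\le y_1<\cdots<y_r$ and $y_j-\alpha\in\Z$. Comparing cuspidal supports forces the nonempty segments among $[\nu^{c_i+1}\rho,\nu^{b_i}\rho]$ to be exactly the $r$ segments of this strongly positive pattern; combining this with $c_1<\cdots<c_t$, $b_1<\cdots<b_t$, $c_i\le b_i$ and the inequalities recalled just before the statement pins $(c_i)$ down to be, for some $k\in[i_m,i_M]$, the tuple with $c_i=\alpha-k+i-1$ on $[k_m,k]$ and $c_i=b_i$ off it (and then $\tau=\sigma_k$), or else the tuple $c_i=b_i$ for all $i$ (and then $\tau=\sigma_c$). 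Substituting back into~(\ref{struct}) identifies the candidates as exactly $L(\delta([\nu^{-b_t}\rho,\nu^{-a_t}\rho]),\ldots,\delta([\nu^{-b_1}\rho,\nu^{-a_1}\rho]);\sigma_c)$, the $\pi_k$ for $k$ in the relevant range, and $\sigma_t$ in case~(i).

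For the lower bound I would argue as follows. The Langlands quotient $L(\delta([\nu^{-b_t}\rho,\nu^{-a_t}\rho]),\ldots,\delta([\nu^{-b_1}\rho,\nu^{-a_1}\rho]);\sigma_c)$ is, by the description of $\widetilde{\pi_L}$ used in Proposition~\ref{propi} and the Langlands classification, the unique irreducible subrepresentation of $\widetilde{\pi_L}\rtimes\sigma_c$, hence a subquotient of $\widetilde{\pi_L}\rtimes\sigma_c=\pi_L\rtimes\sigma_c$ in $R(G)$. For each remaining $k$ I would show $\pi_k\le\pi_L\rtimes\sigma_c$ by a Jacquet‑module argument: apply~(\ref{struct}) to exhibit a summand of $\mu^*(\pi_L\rtimes\sigma_c)$ of the form $L(\delta([\nu^{-c_t}\rho,\nu^{-a_t}\rho]),\ldots,\delta([\nu^{-c_1}\rho,\nu^{-a_1}\rho]))\otimes\sigma_k$ for the tuple $(c_i)$ attached to $k$; compute the analogous Jacquet‑module data for each candidate; and verify — using that the pair $\big((c_i),(b_i)\big)\in\Lad(\pi_L)'$ together with $\sigma_k$ are determined by $k$, and that the candidates are pairwise non‑isomorphic — that only $\pi_k$ can contribute this summand and does so with multiplicity one, whence $\pi_k$ occurs, exactly once, in $\pi_L\rtimes\sigma_c$. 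Together with the upper bound this gives that the semisimplification of $\pi_L\rtimes\sigma_c$ equals the sum over the candidate list, i.e.\ the expression in~(i) or~(ii).

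I expect the main obstacle to be this lower bound — upgrading the $R(G)$‑level/Jacquet‑module information coming from~(\ref{struct}) to genuine occurrence of each $\pi_k$ with multiplicity exactly one, which forces one to combine the combinatorics of $\Lad(\pi_L)$, transitivity of Jacquet modules and parabolic induction, and the rigidity of strongly positive representations. A reasonable alternative for this step is induction on $t-i_m$ (or on $\sum_i(b_i-a_i)$), peeling off the top segment $\delta([\nu^{a_t}\rho,\nu^{b_t}\rho])$ or moving the reducibility point $\alpha$ by one step and matching against the resulting shorter composition series, with the base case $t=1$ being the known structure of $\delta([\nu^{a_1}\rho,\nu^{b_1}\rho])\rtimes\sigma_c$.
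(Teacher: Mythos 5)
Your overall architecture matches the paper's: reducibility from Proposition \ref{propi}, the tempered piece from Theorem \ref{tm1}, an upper bound on the non-tempered subquotients by combining (\ref{struct}) with the strong positivity of $\tau$ to pin down the tuples $(c_i)$ (your case analysis for $c_i=b_i$ off a block $[k_m,k]$ and $c_i+1=\alpha-k+i$ on it is exactly the paper's), and then a lower bound via Jacquet-module multiplicities. Your argument that $L(\delta([\nu^{-b_t}\rho,\nu^{-a_t}\rho]),\ldots,\delta([\nu^{-b_1}\rho,\nu^{-a_1}\rho]);\s_c)$ occurs, via the Langlands subrepresentation of $\widetilde{\pi_L}\rtimes\s_c$, is fine.

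The genuine gap is in the decisive step of the lower bound: establishing that the relevant Jacquet-module constituent occurs in $\mu^*(\pi_L\rtimes\s_c)$ with multiplicity exactly one, and that only $\pi_k$ can account for it. Your plan is to read this off from (\ref{struct}) and from ``the analogous Jacquet-module data for each candidate,'' but (\ref{struct}) expresses $\mu^*(\pi_L\rtimes\s_c)$ as a sum of terms $A\otimes\bigl(L(\delta([\nu^{c_1+1}\rho,\nu^{d_1}\rho]),\ldots)\rtimes\s_c\bigr)$ in which neither tensor factor is irreducible: the left factor is a product of two ladder representations, and the right factor is an induced representation in which $\sigma_k$ could a priori occur for several pairs $\bigl((c_i),(d_i)\bigr)$ and with unknown multiplicity. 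So the multiplicity of $X\otimes\sigma_k$ in $\mu^*(\pi_L\rtimes\s_c)$ is not directly computable from the formula, and comparing candidates pairwise does not substitute for this count. The paper supplies the missing idea: it introduces the auxiliary representation $\Pi=\pi_k^{(1)}\times L(\delta([\nu^{\alpha-k+k_m}\rho,\nu^{b_{k_m}}\rho]),\ldots,\delta([\nu^{\alpha}\rho,\nu^{b_k}\rho]))\rtimes\s_c$, which dominates both $\pi_k$ and $\pi_L\rtimes\s_c$ in $R(G)$, refines the constituent to the full flag-type term $\delta_t\otimes\cdots\otimes\delta_1\otimes\sigma_k$ in $r_\beta(\Pi)$, and proves multiplicity one there by peeling off $\delta_t,\ldots,\delta_{k_m}$ one at a time, using at each stage an extremal exponent of the cuspidal support ($\nu^{-b_t}\rho$ occurring once, then $\nu^{-\alpha+1}\rho$ not occurring elsewhere) to force $M^*(\delta_i)$ to contribute only $\delta_i\otimes 1$, and finally the fact that $\sigma_k$ is the unique irreducible subrepresentation of its standard module. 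Without this (or an equally concrete device such as your suggested induction on $t$, which you do not carry out), the claim that each $\pi_k$ occurs with multiplicity exactly one is unproved, and with it the asserted length of the composition series.
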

\begin{proof}
As seen in Proposition \ref{propi}, if $\nu^{x}\rho\rtimes\sigma_c$ reduces for some $\nu^x\rho\in [\pi_L]$, then the induced representation $\pi_L\rtimes\s_c$ reduces.
The existence of a tempered subquotient $\s_t$ is established in Theorem \ref{tm1}, so we turn to the non-tempered case.
Since $\nu^{\alpha}\rho\in [\pi_L]$, there exists $i\in\{1,\ldots,t\}$ such that $\alpha\in [a_i,b_i]$ so $[i_m,i_M]\neq\emptyset$, while it is possible for $[i_m,t-1]$ to be empty.

Firstly, assume that $\pi=L(\delta_1,\ldots,\delta_l;\tau)$ is a non-tempered subquotient of $\pi_L\rtimes\sigma_c$. 
From (\ref{t1}) we can see that $\tau$, if not cuspidal, is necessarily equal to $\sigma_k$ for some $k\in[i_m,i_M]$. 
Namely, Theorem \ref{tm1} gives the following conditions on real numbers $c_i$ defined in the discussion before Theorem \ref{tm2} since $c_1+1\ge a_1 \ge\frac{1}{2}$:
\begin{enumerate}
    \item For some fixed $k\in[i_m,i_M]$ such that $c_k+1=\alpha$, we get $c_i=b_i$ for $i=k+1,\ldots,t$. Similarly, since $\alpha-k+i>b_i$ for $i=1,2,\ldots,k_m-1$, we get $c_i=b_i$ for $i=1,2,\ldots,k_m-1$.
    \item There exists $k_0\in[k_m,k]$ such that $c_i=b_i$ for $i=k_m,\ldots,k_0-1$ and $c_i+1=\alpha-k+i$ for $i=k_0,\ldots,k$. We claim that $k_0=k_m$. Recall that $\alpha-k+i\in[a_i,b_i],\forall i\in[k_m,k]$.
    Thus if $k_0> k_m$, then $c_{k_0-1}=b_{k_0-1}$ and $c_{k_0}+1=\alpha-k+k_0$ imply 
    $$\alpha-k+k_0-2=c_{k_0}-1 \ge c_{k_0-1}=b_{k_0-1}\ge \alpha-k+k_0-1, $$
    which is a contradiction. 
\end{enumerate}
In this way, $\pi$ is either equal to $L(\delta([\nu^{-b_t}\rho,\nu^{-a_t}\rho]),\ldots,\delta([\nu^{-b_1}\rho,\nu^{-a_1}\rho]);\sigma_c)$ or to $\pi_k$ for some $k\in[i_m,i_M]$.

Conversely, we prove that $\pi_k$ is a subquotient of $\pi_L\rtimes\sigma_c$ for every $k\in [i_m,i_M]$. Let us fix some $k\in [i_m,i_M]$. 
We define $\delta_i=\delta([\nu^{-b_i}\rho,\nu^{-a_i}\rho])$ for $i=1,\ldots,k_m-1,k+1,\ldots,t$, $\delta_i=\delta([\nu^{-\alpha+k-i+1}\rho,\nu^{-a_{i}}\rho])$ for $i=k_m,\ldots,k$,
\begin{gather*}
    \pi_k^{(1)} = \delta_t\times\ldots\times\delta_{k+1}\times\delta_k\times \ldots
    \times \delta_{k_m}\times\delta_{k_m-1}\times\ldots\times\delta_1, \\
    \Pi = \pi_k^{(1)}\times L(\delta([\nu^{\alpha-k+k_m}\rho,\nu^{b_{k_m}}\rho]),\ldots,\delta([\nu^{\alpha}\rho,\nu^{b_k}\rho])) \rtimes \s_c
\end{gather*}
and $\pi_k^{(2)} = L(\delta_t,\ldots,\delta_1)$, i.e. the Langlands subrepresentation of $\pi_k^{(1)}$.
Note that $\sigma_k\hookrightarrow L(\delta([\nu^{\alpha-k+k_m}\rho,\nu^{b_{k_m}}\rho]),\ldots,\delta([\nu^{\alpha}\rho,\nu^{b_k}\rho])) \rtimes \s_c$
since $\sigma_k$ is the unique irreducible subrepresentation of $\delta([\nu^{\alpha-k+k_m}\rho,\nu^{b_{k_m}}\rho])\times\ldots\times\delta([\nu^{\alpha}\rho,\nu^{b_k}\rho]) \rtimes \s_c$. 
Thus together with $\pi_k^{(2)}\hookrightarrow\pi_k^{(1)}$, we obtain $\pi_k\le \Pi$. 
It is an easy consequence of Langlands classification that $$L(\delta([\nu^{a_{k_m}}\rho,\nu^{b_{k_m}}\rho]),\ldots,\delta([\nu^{a_k}\rho,\nu^{b_k}\rho]))$$ is a subquotient of 
$$ \widetilde{\delta}_{k_m}\times\ldots\times\widetilde{\delta}_k\times L(\delta([\nu^{\alpha-k+k_m}\rho,\nu^{b_{k_m}}\rho]),\ldots,\delta([\nu^{\alpha}\rho,\nu^{b_k}\rho])). $$
This, together with the properties of $R(G)$,  clearly implies $\pi_L\rtimes\sigma_c \le \Pi$.

The Frobenius reciprocity implies that $\pi_k^{(2)}\otimes\sigma_k$ is a subquotient of $\mu^*(\pi_k)$ and $\mu^*(\Pi)$.
Also note that the above discussion shows that we can choose real numbers $c_i$ for $i=1,\ldots,t$ to obtain $\pi_k^{(2)}\otimes\sigma_k$ in $\mu^*(\pi_L\rtimes\sigma_c)$.
The transitivity of Jacquet modules now implies that the representation $\delta_t\otimes\ldots\otimes\delta_1\otimes\sigma_k$ is a subquotient of $r_{\beta}(\pi_k)$, $r_{\beta}(\pi_L\rtimes\sigma_c)$ and $r_{\beta}(\Pi)$ for the appropriate parabolic subgroup $P_{\beta}$.

To finish the proof, it remains to see that $\delta_t\otimes\ldots\otimes\delta_1\otimes\sigma_k$ appears in $r_{\beta}(\Pi)$ with multiplicity one. 
Firstly, we want to determine all the constituents of $\mu^*(\Pi)$ of the form $\delta_t\otimes\pi$ for some $\pi\in \Irr(G)$. 
According to the definition of $\pi_L$, $-b_t$ is the smallest exponent in the cuspidal support of $\Pi$. Also, $\nu^{-b_t}\rho$ appears in $[\Pi]$ with multiplicity one. 
Using the multiplicativity of $M^{\ast}$, let $\pi_1 \otimes \pi_2$ denote an irreducible constituent of some factor of $\mu^{\ast}(\Pi)$ such that $\nu^{-b_t}\rho\in [\pi_1]$. We see that $\pi_1\otimes\pi_2$ is necessarily equal to $\delta_t\otimes 1\le  M^*(\delta_t)$.
Thus we get that
$$\delta_t\otimes \delta_{t-1}\times\ldots\times\delta_1\times L(\delta([\nu^{\alpha-k+k_m}\rho,\nu^{b_{k_m}}\rho]),\ldots,\delta([\nu^{\alpha}\rho,\nu^{b_k}\rho])) \rtimes \s_c $$
contains all constituents of $\mu^*(\Pi)$ of the form $\delta_t\otimes\pi$ for some $\pi\in \Irr(G)$. 
Inductively, $\delta_t\otimes\ldots\otimes\delta_1\otimes\sigma_k$ appears in $r_{\beta}(\Pi)$ with the same multiplicity as $\delta_k\otimes\ldots\otimes\delta_1\otimes\sigma_k$ in the Jacquet module of
\begin{gather*} 
    \Pi'=\delta_{k}\times\ldots\times\delta_1\times L(\delta([\nu^{\alpha-k+k_m}\rho,\nu^{b_{k_m}}\rho]),\ldots,\delta([\nu^{\alpha}\rho,\nu^{b_k}\rho])) \rtimes \s_c 
\end{gather*}
with respect to the appropriate parabolic subgroup.

Let us now determine all the constituents of $\mu^*(\Pi')$ of the form $\delta_k\otimes\pi'$ for some $\pi'\in \Irr(G)$. 
Note that $\nu^{-\alpha+1}\rho$ is not an element of the cuspidal support of $\delta_{k-1}\times\ldots\times\delta_{1}$ since minimal exponents in $[\delta_i]$ for $i=1,\ldots,k$ are in strictly increasing order. 
According to the formula (\ref{struct}) we search for $\nu^{-\alpha+1}\rho$ in   
\begin{gather*}
   L(\delta([\nu^{-x_{k}}\rho,\nu^{-\alpha}\rho]),\ldots,\delta([\nu^{-x_{k_m} }\rho,\nu^{-\alpha+k-k_m}\rho]))
\end{gather*}
where $x_{k_m}<\ldots<x_k$ and $\alpha-k+i-1 \le x_i\le b_i $ for $i=k_m,\ldots,k$. 
Thus $-x_{i_0}=-\alpha+1$ for some $i_0\in\{k_m,\ldots,k-1\}$ and $x_i=\alpha-k+i-1$ for $i=i_0+1,\ldots,k$. This implies $-\alpha+1=-x_{i_0}>-x_k=-\alpha+1$, showing it is not possible.
Using the multiplicativity of $M^{\ast}$,
let $\pi_1'\otimes\pi_2'$ denote an irreducible constituent of some factor of $\mu^{\ast}(\Pi')$ such that $\nu^{-\alpha+1}\rho\in [\pi_1']$. 
We see that $\pi_1'\otimes\pi_2'$ is necessarily equal to $\delta_k\otimes1\le M^*(\delta_k)$.
Inductively, $\delta_t\otimes\ldots\otimes\delta_1\otimes\sigma_k$ appears in $r_{\beta}(\Pi)$ with the same multiplicity as $\delta_{k_m-1}\otimes\ldots\otimes\delta_1\otimes\sigma_k$ in the Jacquet module of
\begin{gather*} 
    \Pi''=\delta_{k_m-1}\times\ldots\times\delta_1\times L(\delta([\nu^{\alpha-k+k_m}\rho,\nu^{b_{k_m}}\rho]),\ldots,\delta([\nu^{\alpha}\rho,\nu^{b_k}\rho])) \rtimes \s_c 
\end{gather*}
with respect to the appropriate parabolic subgroup.
Note that the greatest positive exponent in the cuspidal support of $\widetilde{\delta}_{k_m-1}\times\ldots\times\widetilde{\delta}_1$ is $b_{k_m-1}$, which is by definition of $k_m$ strictly less than $\alpha-k+k_m-1$.
Thus the cuspidal supports of representations  $\widetilde{\delta}_{k_m-1}\times\ldots\times\widetilde{\delta}_1$ and $L(\delta([\nu^{\alpha-k+k_m}\rho,\nu^{b_{k_m}}\rho]),\ldots,\delta([\nu^{\alpha}\rho,\nu^{b_k}\rho]))$ are disjoint multisets. 
The constituents of $\mu^*(\Pi'')$ of the form $\pi''\otimes\sigma_k$ for $\pi''\in \Irr(GL)$ with negative exponents in $[\pi'']$ are consequently contained in 
$$ \delta_{k_m-1}\times\ldots\times\delta_1\otimes L(\delta([\nu^{\alpha-k+k_m}\rho,\nu^{b_{k_m}}\rho]),\ldots,\delta([\nu^{\alpha}\rho,\nu^{b_k}\rho])) \rtimes \s_c .$$
Since $\sigma_k$ is a unique subrepresentation of 
$$L(\delta([\nu^{\alpha-k+k_m}\rho,\nu^{b_{k_m}}\rho]),\ldots,\delta([\nu^{\alpha}\rho,\nu^{b_k}\rho])) \rtimes \s_c,$$
we finally conclude that $\delta_t\otimes\ldots\otimes\delta_1\otimes\sigma_k$ appears in $r_{\beta}(\Pi)$ with multiplicity one. 
Hence it appears in $r_{\beta}(\pi_k)$ and $r_{\beta}(\pi_L\rtimes\s_c)$ with multiplicity one and we arrive to the conclusion that $\pi_k$ appears in $\pi_L\rtimes\sigma_c$ with multiplicity one. 
\end{proof}
\bibliographystyle{siam}
\bibliography{bibliografija}
\end{document}